\numberwithin{equation}{section}
\theoremstyle{definition}
\newtheorem{definition}{Definition}[section]
\theoremstyle{remark}
\newtheorem{remark}[definition]{Remark}
\theoremstyle{plain}
\newtheorem{theorem}[definition]{Theorem}
\newtheorem{result}[definition]{Result}
\newtheorem{lemma}[definition]{Lemma}
\newtheorem{example}[definition]{Example}
\newcommand{\D}{\mathbb{D}}
\newcommand\dball[3]{B_{{{#1}}^{{#2}}_{{#3}}}}
\newcommand{\plog}{{\rm Log}}
\newcommand{\bcdot}{\boldsymbol{\cdot}}
\newcommand{\btl}{\blacktriangleleft}
\newcommand{\Z}{\mathbb{Z}}
\newcommand{\N}{\mathbb{N}}
\newcommand{\C}{\mathbb{C}} 
\newcommand{\R}{\mathbb{R}}
\definecolor{DPurple}{rgb}{0.46,0.2,0.69}
\begin{document}

\title[The Carath{\'e}odory topology]{remarks on a result of Sibony on the\\ Carath{\'e}odory topology}

\author{Sudip Dolai}
\address{Department of Mathematics, Indian Institute of Science, Bangalore 560012, India}
\email{sudipdolai@iisc.ac.in}

\begin{abstract}
In this paper, we prove that if a Carath{\'e}odory hyperbolic analytic space $X$ is $C_X$-complete, then its
natural topology is induced by the Carath{\'e}odory distance on $X$. This is an 
improvement of Sibony's result, which concludes the same under the hypothesis that $X$ is $C_X$-finitely 
compact. This improvement is not merely formal; we also show the existence of uncountably many biholomorphically
inequivalent analytic spaces that are not $C_X$-finitely compact but are $C_X$-complete.
\end{abstract}

\keywords{Analytic spaces, Carath{\'e}odory distance, Carath{\'e}odory topology}
\subjclass[2020]{Primary: 32C18, 32F45, 51F99; Secondary: 30C20, 30L15}

\maketitle

\section{Introduction and statement of results}\label{S: intro}
This paper is motivated by the problem of finding a sufficiently general condition ensuring that, given
an analytic space $X$ on which the Carath{\'e}odory pseudodistance is a true distance, the natural topology on
$X$ is induced by the latter distance. This is a very useful condition (it is, in part, the reason why the
Kobayashi distance is so useful) but it is not true in general even for domains in $\C^d$ when $d$ is
sufficiently large\,---\,see \cite[Section~2.4]{jarnickipflug:idca93}.
\smallskip

Before proceeding further, we define some of the terms in the previous paragraph. We will not, however,
explicitly define analytic spaces because they constitute such a general class of objects that their
definition is somewhat involved\,---\,instead, we refer the reader to
\cite[Chapter~V, Section~A]{gunningRossi:afscv65}. Loosely speaking, an analytic space $X$ is a Hausdorff
topological space such that for each point $a\in X$, there exists an open neighbourhood $U_a$ of $a$ and
a number $k(a)\in \Z_+$ that is isomorphic, in an appropriate sense, to a complex-analytic subvariety
of a domain $\Omega_a\subseteq \C^{k(a)}$. We shall call such a neighbourhood a \emph{model neighbourhood}.
We shall refer to the topology on $X$ as the \textit{natural topology} or the \textit{analytic topology}
on $X$, which we will denote by ${\rm top}(X)$. Next, we give the following (in what follows, $\D$ will denote the open unit disc with centre $0\in \C$ and $\mathcal{O}(X, \D)$ will denote the set of all holomorphic functions from $X$ to $\D$):

\begin{definition}\label{D: cara}
Let $X$ be an analytic space. The \textit{M{\"o}bius pseudodistance} for $X$ is defined as 
\begin{equation*}
  C_X^*(x,y):=\sup\left\{\,\left|\frac{f(y)-f(x)}{1-\overline{f(x)}f(y)}\right|: 
  f\in \mathcal{O}(X, \D)\right\}
  =\sup\{|f(y)|: 
  f\in \mathcal{O}(X, \D), \ f(x)=0\},
  \end{equation*}
for any $x,y\in X$.
The \textit{Carath{\'e}odory pseudodistance} for $X$ is defined as 
\[
  C_X(x,y):= \tanh^{-1}(C_X^*(x,y))
\]
for any $x,y\in X$. 
\end{definition}

\begin{remark}
The relationship between $C_X^*$ and $C_X$ arises from the fact that, in defining the latter, the pseudohyperbolic distance
between $f(x)$ and $f(y)$ in the definition of $C_X^*(x,y)$ is replaced by the Poincar{\'e} distance between the latter.
\end{remark}

\begin{remark}\label{Rm: rem}
Since $\tanh^{-1}:[0,1) \to [0, \infty)$ is a monotone increasing continuous function, 
the pseudodistances $C_X$ and $C_X^*$ share many of the same fundamental properties. For example,
$C_X$ is a distance on $X$ if and only if $C_X^*$ is a distance; if either of $C_X$ or $C_X^*$
is a distance on $X$ then the topologies induced by them are the same; etc.
\end{remark}

For an analytic space $X$, we say that $X$ is \textit{Carath{\'e}odory hyperbolic} if $C_X$ is a distance on $X$.
In order to state the results below, we need another set of definitions.

\begin{definition}\label{D: complete}
Let $X$ be a Carath{\'e}odory hyperbolic analytic space. Then:
\begin{itemize}
  \item[$(1)$] $X$ is said to be \textit{$C_X$-complete} if every $C_X$-Cauchy sequence $(x_n)_{n\geq 
  1}\subset X$ converges with respect to ${\rm top}(X)$ to a point $x\in X$, and 
  \item[$(2)$] $X$ is said to be \textit{$C_X$-finitely compact} if for each $x\in X$ and $r>0$ the 
  Carath{\'e}odory ball $\dball{C}{}{X}(x,r)$ (see Section~\ref{SS: nota_conv}) is relatively compact with 
  respect to ${\rm top}(X)$ inside $X$.  
\end{itemize}
\end{definition}

We must clarify that although the term ``$C_X$-complete" may suggest that it refers to $(X, C_X)$ being
Cauchy-complete, it indicates a formally stronger property than the latter. The terminology in 
Definition~\ref{D: complete} coincides with that in \cite{jarnickipflugvigue:aeccbnfca93} by 
Jarnicki--Pflug--Vigu{\'e}, which inspires some of the arguments in this paper.  
\smallskip

A general sufficient condition for the property referred to at the top of this section is given by 
Result~\ref{R: sibony} by Sibony. In the following result, we denote by ${\rm top}(C_X)$ the topology induced by the distance $C_X$.

\begin{result} [paraphrasing {\cite[Corollaire~4]{sibony: pdfhbmc75}}]\label{R: sibony}
Let $X$ be a Carath{\'e}odory hyperbolic analytic space that is $C_X$-finitely compact. Then, ${\rm top}(X)={\rm top}(C_X)$.
\end{result}

\begin{remark}
In Sibony's notation in \cite{sibony: pdfhbmc75}, being $C_X$-finitely compact is expressed as ``$C_X^{H^{\infty}(X)}$ complete".
That the two terminologies are the same is evident from \cite[D{\'e}finition~3]{sibony: pdfhbmc75}.
\end{remark}

It turns out that if an analytic space $X$ is $C_X$-finitely compact, then it is $C_X$-complete. Moreover, this
inclusion is strict, as shown by \cite{jarnickipflugvigue:aeccbnfca93}. Our first result improves upon
Result~\ref{R: sibony}; it yields the same conclusion as the latter under a weaker condition. We will elaborate on
this presently, but let us now state our first result.

\begin{theorem}\label{T: strngly_complte}
Let $X$ be an analytic space that is $C_X$-complete. Then, ${\rm top}(X)= {\rm top}(C_X)$.    
\end{theorem}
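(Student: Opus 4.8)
The plan is to prove the two inclusions ${\rm top}(C_X)\subseteq {\rm top}(X)$ and ${\rm top}(X)\subseteq {\rm top}(C_X)$ separately, each by showing that the appropriate identity map on $X$ is continuous. For the second inclusion I will use that $(X,C_X)$ is a metric space, so that ${\rm top}(C_X)$ is first countable, which reduces continuity of the identity map $(X,{\rm top}(C_X))\to (X,{\rm top}(X))$ to \emph{sequential} continuity. The completeness hypothesis will enter only at this point.

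\emph{The inclusion ${\rm top}(C_X)\subseteq {\rm top}(X)$.} I first record that $C_X$ is continuous with respect to ${\rm top}(X)$, for then each ball $\dball{C}{}{X}(a,r)=\{x: C_X(a,x)<r\}$ is ${\rm top}(X)$-open and hence ${\rm top}(C_X)\subseteq {\rm top}(X)$. Lower semicontinuity is automatic: writing $C_X^*(x,y)=\sup_f |(f(y)-f(x))/(1-\overline{f(x)}f(y))|$ exhibits $C_X^*$ as a supremum of functions jointly continuous in $(x,y)$ (each $f\in\mathcal{O}(X,\D)$ being continuous), so $C_X^*$, and hence $C_X=\tanh^{-1}(C_X^*)$, is lower semicontinuous. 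For upper semicontinuity it suffices, by the triangle inequality, to show that $C_X(a,y)\to 0$ as $y\to a$ in ${\rm top}(X)$. This is a local statement: passing to a model neighbourhood $U_a\cong V$, where $V$ is a subvariety of a domain $\Omega\subseteq \C^{k(a)}$ and $a$ is identified with its image, monotonicity under the inclusions $X\supseteq U_a$ and $V\supseteq V\cap\Delta$ (for a small polydisc $\Delta\ni a$) gives $C_X(a,y)\le C_{V\cap\Delta}(a,y)$, and one bounds the right-hand side using the local model.

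\emph{The inclusion ${\rm top}(X)\subseteq {\rm top}(C_X)$.} I claim the identity map $(X,{\rm top}(C_X))\to (X,{\rm top}(X))$ is continuous; by first countability of ${\rm top}(C_X)$ it is enough to check sequential continuity. So suppose $C_X(x_n,x)\to 0$. By the triangle inequality $(x_n)_{n\ge 1}$ is $C_X$-Cauchy, so $C_X$-completeness provides $y\in X$ with $x_n\to y$ in ${\rm top}(X)$. By the continuity of $C_X$ established above, $C_X(x_n,x)\to C_X(y,x)$; since also $C_X(x_n,x)\to 0$, we obtain $C_X(y,x)=0$, whence $y=x$ because $X$ is Carath\'eodory hyperbolic. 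Thus $x_n\to x$ in ${\rm top}(X)$, which is exactly the required sequential continuity, and therefore ${\rm top}(X)\subseteq {\rm top}(C_X)$. Combined with the first inclusion this yields ${\rm top}(X)={\rm top}(C_X)$.

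The genuinely nontrivial ingredient is the continuity of $C_X$ with respect to ${\rm top}(X)$\,---\,specifically the upper bound $C_X(a,y)\to 0$ on a model neighbourhood, where $V$ may be singular at $a$, so one cannot simply invoke a coordinate chart; this step (or a citation to the known continuity of the Carath\'eodory distance on complex spaces) is where the analytic-space structure is actually used. By contrast, once continuity is in hand, the completeness argument is soft: it replaces Sibony's extraction of a convergent subsequence from a relatively compact ball (available only under finite compactness) by the direct observation that a $C_X$-convergent sequence is $C_X$-Cauchy and hence, under completeness alone, already ${\rm top}(X)$-convergent.
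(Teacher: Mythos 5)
Your argument is correct, and for the substantive inclusion ${\rm top}(X)\subseteq{\rm top}(C_X)$ it takes a genuinely different route from the paper. The paper first proves an equivalence (Lemma~\ref{L:equvlnce_condn}): ${\rm top}(X)={\rm top}(C_X)$ if and only if every point of the interior of a compact set $K$ admits a Carath\'eodory ball contained in $K$; establishing the useful direction of that lemma requires producing, inside an arbitrary open set $W\ni a$, a compact set with $a$ in its interior, which is done by passing to a model neighbourhood and exploiting local compactness of the local model. Theorem~\ref{T: strngly_complte} is then obtained by verifying the ball-in-compact condition through a contradiction argument. You bypass the lemma and the local-compactness step entirely: since $(X,C_X)$ is a metric space, ${\rm top}(C_X)$ is first countable, so continuity of the identity $(X,{\rm top}(C_X))\to(X,{\rm top}(X))$ reduces to sequential continuity, and your chain ``$C_X$-convergent $\Rightarrow$ $C_X$-Cauchy $\Rightarrow$ ${\rm top}(X)$-convergent to some $y$ $\Rightarrow$ $y=x$ by continuity and hyperbolicity'' is exactly the identification the paper performs inside its contradiction argument, just packaged more economically and without any appeal to compactness. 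One remark: both proofs ultimately rest on the continuity of $C_X(a,\bcdot)$ with respect to ${\rm top}(X)$, which the paper simply asserts (``by definition''); you correctly isolate upper semicontinuity near the diagonal as the only nontrivial point and sketch the localization $C_X(a,y)\le C_{U_a}(a,y)$, leaving the estimate on the local model unproved. Completing that step, or citing the standard continuity of the Carath\'eodory pseudodistance on complex spaces, is all that remains; the paper is no more explicit there than you are, so this is not a gap relative to the paper's own standard of rigour.
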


The example in \cite{jarnickipflugvigue:aeccbnfca93} shows that Theorem~\ref{T: strngly_complte} is a genuine
improvement of Result~\ref{R: sibony}; it provides an example of an analytic space $X$ to which
Result~\ref{R: sibony} is inapplicable because it is not $C_X$-finitely compact,
but for which Theorem~\ref{T: strngly_complte} shows that ${\rm top}(X)= {\rm top}(C_X)$. However,
the question now arises: \textit{how abundant is the class of analytic spaces $X$ that are $C_X$-complete
but \textbf{not}
$C_X$-finitely compact?}. Our next result implicitly answers this question.

\begin{theorem}\label{T: uncountable_example}
There exist uncountably many biholomorphically inequivalent Carath{\'e}odory hyperbolic analytic spaces that are
not $C_X$-finitely compact but on each of which the metric topology induced by the Carath{\'e}odory distance 
coincides with its natural topology.    
\end{theorem}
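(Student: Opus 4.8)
The plan is to manufacture the uncountable family by multiplying a single ``seed'' example by a one-parameter family of conformally distinct annuli. Let $Y$ denote the Carath\'eodory hyperbolic analytic space of \cite{jarnickipflugvigue:aeccbnfca93} that is $C_Y$-complete but not $C_Y$-finitely compact, and for $t\in(0,1)$ set $A_t:=\{z\in\C : t<|z|<1\}$. I would define $X_t:=Y\times A_t$ and show that each $X_t$ is Carath\'eodory hyperbolic, $C_{X_t}$-complete (so that ${\rm top}(X_t)={\rm top}(C_{X_t})$ follows immediately from Theorem~\ref{T: strngly_complte}), not $C_{X_t}$-finitely compact, and that $X_s\not\cong X_t$ whenever $s\neq t$. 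Carath\'eodory hyperbolicity is the easiest point: pulling back separating functions from each factor shows that $\mathcal{O}(X_t,\D)$ separates points.

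The completeness and the failure of finite compactness both rest on the product formula $C_{X\times Z}=\max\{C_X,C_Z\}$ (see \cite{jarnickipflug:idca93}). Since the analytic topology on a product is the product topology and, by the formula, the product distance dominates each factor distance, a $C_{X_t}$-Cauchy sequence is Cauchy in each coordinate; as $Y$ is $C_Y$-complete and $A_t$ is a bounded, finitely connected planar domain with smooth boundary and hence $C$-finitely compact (in particular $C$-complete), both coordinates converge, so $X_t$ is $C_{X_t}$-complete. For the failure of finite compactness, the same formula gives $B_{C_{X_t}}((y,a),r)=B_{C_Y}(y,r)\times B_{C_{A_t}}(a,r)$; choosing $(y,r)$ so that $B_{C_Y}(y,r)$ is not relatively compact (possible because $Y$ is not $C_Y$-finitely compact) and projecting to the first factor shows that this product ball is not relatively compact either.

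The delicate point is to distinguish the $X_t$ up to biholomorphism, and here I would exploit the singular locus rather than attempt a cancellation theorem for products. A biholomorphism of analytic spaces is a local analytic isomorphism, so it carries $\mathrm{Sing}(X_s)$ onto $\mathrm{Sing}(X_t)$ and restricts to a biholomorphism between them. Because $A_t$ is smooth, $\mathrm{Sing}(Y\times A_t)=\mathrm{Sing}(Y)\times A_t$; if $\mathrm{Sing}(Y)$ is a single point $\sigma$ (which I would read off from the construction of \cite{jarnickipflugvigue:aeccbnfca93}, or arrange by a mild modification preserving completeness and the failure of finite compactness), then $\mathrm{Sing}(X_t)$ carries the induced structure of $A_t$. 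A biholomorphism $X_s\to X_t$ would then induce a conformal equivalence $A_s\cong A_t$, forcing $s=t$ since annuli of distinct moduli are conformally inequivalent. As $(0,1)$ is uncountable, this produces uncountably many pairwise biholomorphically inequivalent spaces, each satisfying the required conclusion.

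The hard part will be precisely the control of $\mathrm{Sing}(Y)$. If the seed's singular set were positive-dimensional, the preservation-of-singularities argument would only reduce inequivalence to a cancellation problem $\mathrm{Sing}(Y)\times A_s\cong\mathrm{Sing}(Y)\times A_t$, which is not automatic; the cleanest route is therefore to guarantee a $0$-dimensional, ideally single-point, singular locus. Failing that, one would need a genuine unique-factorization or rigidity statement for Carath\'eodory hyperbolic products, or an alternative biholomorphic invariant that reads off the annulus modulus directly from $X_t$. Beyond this, I would also have to confirm the two standard inputs used above, namely the product formula for $C$ and the $C$-completeness of $A_t$, and verify that the reduced structure on $\{\sigma\}\times A_t$ is indeed that of the annulus.
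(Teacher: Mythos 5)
Your route is genuinely different from the paper's and, with two repairs, it works. The paper does not take products: for each $R>1$ it builds a \emph{one-dimensional} space $X(R)$ directly, by gluing countably many copies of the annulus $A(R)$ to a base copy along finite point sets, and then re-establishes hyperbolicity, $C$-completeness and the failure of finite compactness for $X(R)$ from scratch --- this is what Sections~\ref{S: Tech_Lemma} and~\ref{S: example} are for (Carath\'eodory extremals on the annulus, the covering map $\D\to A(R)$, and the Blaschke-type estimate~\eqref{E: final_inequality}). Inequivalence is then obtained by showing that a biholomorphism $X(R_1)\to X(R_2)$ must preserve the locus of points with two-point fibre under the quotient map (in substance, your singular-locus argument) and hence induces a biholomorphism $A(R_1)\to A(R_2)$. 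Your construction $X_t=Y\times A_t$ buys a much shorter proof of the analytic properties by outsourcing them to the seed $Y$, at the price of producing two-dimensional examples and of the two loose ends below.

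First, you should not lean on the full product formula $C_{X\times Z}=\max\{C_X,C_Z\}$: its nontrivial inequality is proved in the cited reference for domains, and you would have to justify it for complex spaces. You do not need it. Completeness uses only that the projections are $C$-contractions; and for the failure of finite compactness the slice inclusion $y\mapsto(y,a)$ is holomorphic, so $C_{X_t}\big((y,a),(y',a)\big)\le C_Y(y,y')$, whence $B_{C_Y}(y_0,r)\times\{a\}\subseteq B_{C_{X_t}}\big((y_0,a),r\big)$; the left-hand set is already not relatively compact when $B_{C_Y}(y_0,r)$ is not. Second, your worry about $\mathrm{Sing}(Y)$ resolves itself, though not in the way you hoped: in the Jarnicki--Pflug--Vigu\'e example the singular locus is not a single point but a countably infinite \emph{discrete} set of nodes (one for each gluing point). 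That is still enough: $\mathrm{Sing}(Y\times A_t)=\mathrm{Sing}(Y)\times A_t$ is a countable disjoint union of copies of $A_t$, a biholomorphism $X_s\to X_t$ restricts to a biholomorphism of singular loci and carries a connected component onto a connected component, i.e.\ $A_s$ onto $A_t$, forcing $s=t$. So no cancellation theorem is needed; just replace your ``single point'' hypothesis by this component-wise argument.
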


The proof of Theorem~\ref{T: uncountable_example} relies on a class of examples that are presented in
Section~\ref{S: example}. The proofs of the above theorems are given in Section~\ref{S: pf_thm}.
\smallskip

\section{Topological preliminaries}\label{S: top_pre}
In this section, we present a lemma that forms the core of the proof of Theorem~\ref{T: strngly_complte}.
But first, we present some notation and conventions, which will be needed below and in later sections.  

\subsection{Notation and conventions}\label{SS: nota_conv}
In the following list, $X$ is a Carath{\'e}odory hyperbolic analytic space.
\begin{enumerate}
  \item[$(1)$] In view of Remark~\ref{Rm: rem}, $C_X^*$ too is a distance on $X$. The topology induced by it is denoted by ${\rm top}(C_X^*)$.  
  \item[$(2)$] We will write 
\[
  x_n \overset{X}{\longrightarrow} x \quad \text{and} \quad x_n \overset{C_X}{\longrightarrow} x
\]
to indicate the convergence of a sequence $(x_n)_{n\geq 1} \subset X$ with respect to ${\rm top}(X)$ and ${\rm top}(C_X)$, respectively. 
  \item[$(3)$] In our proofs below, it is more convenient to work with the M{\"o}bius distance in some 
  instances (in view of Remark~\ref{Rm: rem}, this does not affect our conclusions). Let $d_X$ denote either 
  $C_X^*$ or $C_X$. Then, for $x\in X$ and $r>0$, $\dball{d}{}{X}(x, r)$ is defined as 
  \[
    \dball{d}{}{X}(x, r):=\{y\in X: d_X(x, y)<r\}.
  \]
  \item[$(4)$] Given a point $x \in \C^d$ and $r>0$, $B^d(x,r)$ denotes the open Euclidean ball in
  $\C^d$ with radius $r$ and centre $x$. When $d=1$, for simplicity, we will write $B(x, r):=B^1(x,r)$.
  \item[$(5)$] In what follows, if $S \subseteq X$, then $S^{\mathrm{o}}$ will denote the interior of $S$ with
  respect to ${\rm top}(X)$. Similarly, if a topological condition is ascribed to $S$ without any further 
  qualification, this condition will be understood to hold with respect to ${\rm top}(X)$.
\end{enumerate}

\subsection{A key lemma} We now provide a statement and proof of a lemma central to the proof of Theorem~\ref{T: strngly_complte}.
\begin{lemma}\label{L:equvlnce_condn}
Let $X$ be a Carath{\'e}odory hyperbolic analytic space. Then, the 
following conditions are equivalent:
\begin{enumerate}
  \item[$(1)$] For any compact subset $K \varsubsetneq X$ such that $K^{\mathrm{o}} \neq \emptyset$,
  given any $z \in K^{\mathrm{o}}$, there exists a number $r_z >0$ such that $\dball{C}{}{X}(z, r_z) \subseteq K$.
  \item[$(2)$] ${\rm top}(X) = {\rm top}(C_X)$.
\end{enumerate}
\end{lemma}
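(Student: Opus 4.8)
The plan is to reduce the asserted equivalence to a single inclusion of topologies. I would begin by recalling the standard fact that $C_X$ is continuous with respect to ${\rm top}(X)$: for fixed $x$ the function $y \mapsto C_X^*(x,y)$ is a supremum of the continuous maps $y \mapsto |f(y)|$, while the convergence $C_X(x_n,x)\to 0$ whenever $x_n \overset{X}{\longrightarrow} x$ follows from a local-model argument, bounding $C_X$ from above near any point by the Carath{\'e}odory distance of an ambient model neighbourhood and controlling the latter by the Euclidean distance. Consequently the identity map $(X, {\rm top}(X)) \to (X, {\rm top}(C_X))$ is continuous, i.e. every ${\rm top}(C_X)$-open set is ${\rm top}(X)$-open, so the inclusion ${\rm top}(C_X) \subseteq {\rm top}(X)$ holds unconditionally. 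Thus condition $(2)$ is equivalent to the reverse inclusion ${\rm top}(X) \subseteq {\rm top}(C_X)$, which unwinds to the statement that every ${\rm top}(X)$-open neighbourhood of a point contains a Carath{\'e}odory ball about that point. This reformulation is what I would match against condition $(1)$.

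For the implication $(2) \Rightarrow (1)$ I would argue directly. Let $K \varsubsetneq X$ be compact with $K^{\mathrm{o}} \neq \emptyset$, and fix $z \in K^{\mathrm{o}}$. Since $K^{\mathrm{o}}$ is ${\rm top}(X)$-open, condition $(2)$ makes it ${\rm top}(C_X)$-open; as $z \in K^{\mathrm{o}}$, there is $r_z > 0$ with $\dball{C}{}{X}(z, r_z) \subseteq K^{\mathrm{o}} \subseteq K$, which is exactly the conclusion of $(1)$.

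For the harder implication $(1) \Rightarrow (2)$ I would use that an analytic space is locally compact Hausdorff: each point has a model neighbourhood homeomorphic to an analytic subvariety of a domain in some $\C^k$, hence to a locally closed, and therefore locally compact, subset of $\C^k$. Given a ${\rm top}(X)$-open set $U$ and a point $z \in U$, the case $U = X$ being trivial, I may assume $U \neq X$. By local compactness together with the Hausdorff property I would choose a compact neighbourhood $K$ of $z$ with $z \in K^{\mathrm{o}} \subseteq K \subseteq U$; since $U \neq X$ this $K$ is automatically a proper subset, and $K^{\mathrm{o}} \neq \emptyset$ because it contains $z$. Applying $(1)$ yields $r_z > 0$ with $\dball{C}{}{X}(z, r_z) \subseteq K \subseteq U$, so $U$ is ${\rm top}(C_X)$-open. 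As $z$ and $U$ were arbitrary, ${\rm top}(X) \subseteq {\rm top}(C_X)$; combined with the free inclusion above this gives ${\rm top}(X) = {\rm top}(C_X)$.

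The main obstacle, and the only genuinely non-formal ingredient, is the continuity of $C_X$, equivalently the free inclusion ${\rm top}(C_X) \subseteq {\rm top}(X)$: lower semicontinuity is immediate from the supremum definition, but the convergence $C_X(x_n,x)\to 0$ requires the local reduction to an ambient domain in $\C^k$ and a Schwarz-type upper bound on the Carath{\'e}odory distance of the model. Everything else is a matter of choosing compact neighbourhoods carefully and keeping track of which topology is the finer one; I would take care to record that the inclusion ${\rm top}(C_X) \subseteq {\rm top}(X)$ does all the work in one direction, so that conditions $(1)$ and $(2)$ only ever concern the reverse inclusion.
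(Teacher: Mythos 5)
Your proof is correct and follows essentially the same route as the paper: the unconditional inclusion ${\rm top}(C_X)\subseteq{\rm top}(X)$ from continuity of $C_X(a,\bcdot)$, local compactness via model neighbourhoods to produce the compact set for $(1)\Rightarrow(2)$, and the properness of $K$ handled in the same spirit. Your $(2)\Rightarrow(1)$ direction is a slight streamlining (reading off a ball inside the ${\rm top}(C_X)$-open set $K^{\mathrm{o}}$ directly, where the paper argues by contradiction that $\inf\{C_X(x,z):x\in X\setminus K\}>0$), but the two arguments are interchangeable.
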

\begin{proof}
 Let $K \varsubsetneq X$ be a compact set with $K^{\mathrm{o}} \neq \emptyset $ and assume 
that for any $z \in K^{\mathrm{o}}$,  there exists $r_z >0$ such that $\dball{C}{}{X}
(z, r_z) \subseteq K$. Since, by definition, $C_X(a, \bcdot): X \to \R$ is a continuous function for 
all $a \in X$ (relative to the natural topologies on the domain and the range), ${\rm top}
(C_X) \subseteq {\rm top}(X)$. So we need to show that ${\rm top}(X) \subseteq {\rm top}
(C_X)$. Let $W$ be an open set in $X$; fix a point $a \in W$. Let $(U_a, f_a)$ be a model 
neighbourhood of $a$ with $U_a \subseteq W$ and let 
$\big(\prescript{} {k(a)}{\mathcal{O}}/ \mathscr{I}_{Y^a}\vert Y^a\big)$ be the local model, where 
$k(a) \in \Z_+$ and $Y^a$ is a subvariety of a domain $\Omega_a \subseteq \C^{k(a)}$ with 
$\alpha = f_a(a)$. As $Y^a$ is locally compact, there exists $\varepsilon >0$ such that the
set $K_a:= \overline{B^{k(a)}(\alpha, \varepsilon)} \cap Y^a \varsubsetneq Y^a$ (see \cite[Chapter~V, Section~A]{gunningRossi:afscv65} for an explanation of the notation). Note that $K_a$ is 
compact. As $f_a$ is a homeomorphism from $U_a$ to $Y^a$, we have $f_a^{-1}(K_a)$ is a compact set 
in $U_a$ with $a$ an interior point. By hypothesis, there exists $r_a>0$ such that $\dball{C}{}
{X}(a, r_a) \subseteq f_a^{-1}(K_a) \varsubsetneq U_a \subseteq W$. As $W$ and $a\in W$ were 
arbitrary, we have ${\rm top}(X) \subseteq {\rm top}(C_X)$.%
\smallskip

Now assume ${\rm top}(X) = {\rm top}(C_X)$. Let $K \subseteq X$ be a compact set such that
$K^{\mathrm{o}} \neq \emptyset$ and let $z \in K^{\mathrm{o}}$.

\medskip
\noindent{\textbf{Claim.} \emph{${\rm inf }\{C_X(x, z): x \in X \setminus K\} >0$}}.
\smallskip

\noindent{Suppose the claim is false. Then, there exists a sequence $(x_n)_{n \geq 1} \subset X 
\setminus K$} such that $x_n \overset{C_X}{\longrightarrow} z$.
As ${\rm top}(X) = {\rm top}
(C_X)$, it follows that $x_n \overset{X}{\longrightarrow} z$. As $x_n \in X 
\setminus K$ for all $n \in \Z_+$, the fact that
$x_n \overset{X}{\longrightarrow} z$ contradicts the
fact that $z\in K^{\mathrm{o}}$. Hence the claim.
\hfill$\btl$
\smallskip

By the above claim, there exists an $r_z >0$ such that $\dball{C}{}{X}(z, r_z) \subseteq K$. This
completes the proof.
\end{proof}
\smallskip

\section{Analytical preliminaries}\label{S: Tech_Lemma}
This section is devoted to several observations and results that play a vital role in the construction of Example~\ref{Ex: annulus}. We begin with the following definition.
\begin{definition}
Let $U\varsubsetneq\C$ be a bounded domain none of whose boundary components is a single point. Fix a point $p\in U$ and consider the extremal problem 
\[
  \max\{|g'(p)|: g\in\mathcal{O}(U,\D)\}.
\]
The solution to this problem exists uniquely, call it $F$. The function $F$ is said to be the \textit{Ahlfors function} for $U$ and $p$.
\end{definition}

Now we present a result that is used in Step~$4$ in Section~\ref{S: example}.
\begin{result}[{\cite[Theorem~3.1.12]{krantz:tgtcv25}}]\label{R: Ahlfors}
Let $U\Subset\C$ be a domain bounded by $m+1$ disjoint analytic simple closed curves $\Gamma_0, \dots, \Gamma_m$, let $p\in U$, and let $F$ be the 
Ahlfors function for $U$ and $p$. Then, $F$ maps $U$ onto $\D$ exactly $m+1$ times and $F$ extends analytically 
over each $\Gamma_j$ and maps each $\Gamma_j$ homeomorphically onto $\partial\D$. 
\end{result}

We shall now present several observations and lemmas. We begin with the properties of a holomorphic covering map from $\D$ to an annulus.

\begin{lemma}\label{L: covering_map_lemma}
Let $A(R)$ denote the annulus $\{w\in \C: 1<|w|<R\}$.
Then,
\begin{enumerate}
  \item [$(1)$] $p:\D \to A(R)$ defined by
  \begin{equation}\label{E: covering_map}
    p(z):=\sqrt R\exp\bigg(-\frac{i}{\pi}\log R \bcdot\plog \Big(\frac{1-z}{1+z}\Big)\bigg),
  \end{equation}
where $\plog$ denotes the principal holomorphic branch of the logarithm, is a covering map that maps $0$ to $\sqrt R$.
\item[$(2)$] For each $m\in \N\setminus\{0,1\},$
  \[
    x(m):=\frac{1-\exp\left(i(\tfrac{\pi}{2}-\tfrac{\pi}{m})\right)}
    {1+\exp\left(i(\tfrac{\pi}{2}-\tfrac{\pi}{m})\right)} \in p^{-1}\{R^{1-\tfrac{1}{m}}\}.
  \]
  \item[$(3)$] $|x(m)|^2=\big(1-\sin(\pi/m)\big)\big/ \big(1+\sin(\pi/m)\big)$ for each $m\in
  \N\setminus\{0,1\}$.
\end{enumerate}
\end{lemma}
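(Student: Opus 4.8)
The plan is to exhibit $p$ as a composition of four elementary maps and to track the image of each factor, so that the covering property asserted in $(1)$ becomes transparent while the identities in $(2)$ and $(3)$ reduce to direct substitution.

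First I would introduce the Cayley transform $\phi(z) := (1-z)/(1+z)$, which is a biholomorphism of $\D$ onto the right half-plane $H := \{w \in \C : \re(w) > 0\}$ with $\phi(0) = 1$, and which is moreover an involution. Since $H$ is simply connected and omits the origin, $\plog$ restricts to a biholomorphism of $H$ onto the horizontal strip $S := \{u + iv : |v| < \pi/2\}$. Writing $L(w) := -\tfrac{i}{\pi}\log R \cdot w$, the map $L$ rotates by $-\pi/2$ and scales by $\tfrac{\log R}{\pi} > 0$, hence carries $S$ biholomorphically onto the vertical strip $S' := \{u + iv : |u| < \tfrac{1}{2}\log R\}$. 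Finally, with $E(\zeta) := \sqrt{R}\exp(\zeta)$ one has $|E(u+iv)| = \sqrt{R}\,\exp(u)$, which sweeps out exactly $(1,R)$ as $u$ ranges over $(-\tfrac{1}{2}\log R, \tfrac{1}{2}\log R)$; since $E$ has period $2\pi i$ in the vertical direction, $E$ is a holomorphic covering map of $S'$ onto $A(R)$. As $p = E \circ L \circ \plog \circ \phi$ is the composition of three biholomorphisms followed by a covering map, it is itself a covering map, and $p(0) = E(0) = \sqrt{R}$. This establishes $(1)$.

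For $(2)$, set $\alpha := \tfrac{\pi}{2} - \tfrac{\pi}{m}$, so that the stated formula reads $x(m) = \phi(e^{i\alpha})$; since $\phi$ is an involution, $\phi(x(m)) = e^{i\alpha}$. For $m \geq 2$ we have $\alpha \in [0, \tfrac{\pi}{2})$, so $e^{i\alpha} \in H$ and $\plog(e^{i\alpha}) = i\alpha$. Then $L(i\alpha) = \tfrac{\alpha}{\pi}\log R$ and $p(x(m)) = \sqrt{R}\exp\!\big(\tfrac{\alpha}{\pi}\log R\big) = R^{\,1/2 + \alpha/\pi}$. Substituting $\alpha/\pi = \tfrac{1}{2} - \tfrac{1}{m}$ gives the exponent $1 - \tfrac{1}{m}$, proving $x(m) \in p^{-1}\{R^{1 - 1/m}\}$. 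Statement $(3)$ is then purely trigonometric: with $\zeta = e^{i\alpha}$ one computes $|x(m)|^2 = |1-\zeta|^2/|1+\zeta|^2 = (1 - \cos\alpha)/(1 + \cos\alpha)$, and the cofunction identity $\cos(\tfrac{\pi}{2} - \tfrac{\pi}{m}) = \sin(\tfrac{\pi}{m})$ converts this into the claimed expression.

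I expect no serious obstacle here; the whole content sits in the bookkeeping for $(1)$, that is, in verifying that each factor has the asserted domain and image and that the strips match up correctly after the rotation-and-scaling $L$. The only points demanding a little care are confirming that $\sqrt{R}\exp(u)$ sweeps out precisely $(1,R)$ and that the final factor $E$ is genuinely a covering map rather than merely a surjective holomorphic map; once these are in hand, $(2)$ and $(3)$ are immediate.
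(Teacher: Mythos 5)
Your proof is correct and follows essentially the same route as the paper: both decompose $p$ into the Cayley transform $\D\to H$, the logarithm onto a horizontal strip, an affine rotation--scaling onto a vertical strip of width $\log R$, and the exponential as the covering factor, and both obtain $(2)$ by the involution property of $z\mapsto(1-z)/(1+z)$ and $(3)$ by direct computation. The only (immaterial) difference is that you use the symmetric strip $\{|u|<\tfrac12\log R\}$ with $E=\sqrt R\exp$ where the paper uses $\{0<x<\log R\}$ with $\exp$.
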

\begin{proof}
The desired covering map can be constructed by first mapping $\D$ conformally to the right half
plane, then mapping the latter conformally to the strip $\{x+iy\in \C: -\pi/2 <y<\pi/2\}$, which 
can be mapped by a homothety and a translation onto the strip
\begin{equation*}
  \mathcal{S}:=\{x+iy\in \C: 0<x<\log R\}.
\end{equation*}
The exponential map, clearly, serves as a covering map $\mathcal{S}\to A(R)$. The right-hand side
of equation~\eqref{E: covering_map} represents the sequence of compositions of the standard mappings 
hinted at in the above description. This establishes $(1)$.
\smallskip

We will show that $p(x(m))=R^{1-\tfrac{1}{m}}$ for each $m\in \N\setminus\{0,1\}$. Fix an
$m_0\in \N\setminus\{0,1\}$ and substitute $z=x(m_0)$ into equation~\eqref{E: covering_map}. As the map
$\C\setminus\{-1\}\ni z \longmapsto (1-z)/(1+z)$ is an involution and $\plog$ is the
principal holomorphic branch of the logarithm, we have
\begin{equation*}
p(x(m_0))= \sqrt R \exp\big((\tfrac{1}{2}-\tfrac{1}{m_0})\log R\big). \end{equation*}
Using the identity $\exp(a\log b)=b^a$ for each $a\in \R$ and $b>0$ gives $p(x(m_0))=R^{1-\tfrac{1}
{m_0}}$. As $m_0 \in \N\setminus\{0,1\}$ was arbitrary, this completes the proof of $(2)$.
\smallskip

Finally, $(3)$ follows from the expression defining $x(m)$ by an elementary computation.
\end{proof}

Now we derive an upper bound of $|x(m)|$ for sufficiently large $m\in \Z_+$. This bound plays a crucial 
role in Step~$4$ in Section~\ref{S: example}.

\begin{lemma}\label{L: upperbound}
With $x(m)$ as defined in the statement of Lemma~\ref{L: covering_map_lemma}, there exists $m_1 
\in \Z_+$ such that 
\begin{equation}\label{E: upper_bound}
  C^*_{\D}(0, x(m)) = |x(m)|\leq 1-\tfrac{2}{m+1} \quad\forall m \geq m_1.
\end{equation}
\end{lemma}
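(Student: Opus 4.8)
The first assertion, $C^*_{\D}(0,x(m))=|x(m)|$, I would dispose of immediately: taking $f=\mathrm{id}_{\D}\in\mathcal{O}(\D,\D)$ (which satisfies $f(0)=0$) shows $C^*_{\D}(0,w)\ge|w|$ for every $w\in\D$, while the Schwarz lemma gives $|f(w)|\le|w|$ for every $f\in\mathcal{O}(\D,\D)$ with $f(0)=0$, whence $C^*_{\D}(0,w)=|w|$. So it remains only to bound $|x(m)|$, and for this I would invoke Lemma~\ref{L: covering_map_lemma}$(3)$ to write, with the abbreviation $s:=\sin(\pi/m)$,
\[
  |x(m)|=\sqrt{\frac{1-s}{1+s}}.
\]

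Next I would reduce the target inequality \eqref{E: upper_bound} to an elementary statement about the sine. For $m\ge 2$ the right-hand side $1-\tfrac{2}{m+1}=\tfrac{m-1}{m+1}$ is positive, so squaring is legitimate and \eqref{E: upper_bound} is equivalent to
\[
  \frac{1-s}{1+s}\le\frac{(m-1)^2}{(m+1)^2}.
\]
Clearing the (positive) denominators and simplifying by means of the identities $(m+1)^2-(m-1)^2=4m$ and $(m+1)^2+(m-1)^2=2(m^2+1)$, this is in turn equivalent to
\[
  \sin\!\Big(\frac{\pi}{m}\Big)=s\ \ge\ \frac{2m}{m^2+1}.
\]

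Finally I would establish this last inequality for all large $m$ by comparing crude estimates of its two sides. On one hand $\tfrac{2m}{m^2+1}\le\tfrac{2}{m}$; on the other hand the standard bound $\sin t\ge t-\tfrac{t^3}{6}$ (valid for all $t\ge 0$) gives $\sin(\pi/m)\ge \tfrac{\pi}{m}-\tfrac{\pi^3}{6m^3}$. Hence it suffices that $\tfrac{\pi}{m}-\tfrac{\pi^3}{6m^3}\ge\tfrac{2}{m}$, i.e. that $m^2\ge\tfrac{\pi^3}{6(\pi-2)}$, which holds for all sufficiently large $m$; taking $m_1$ to be any integer exceeding this threshold then completes the proof. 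There is no real obstacle here: the only point demanding care is keeping track of the direction of the inequalities while squaring and cross-multiplying, and the decisive structural fact is simply that $\pi>2$, which supplies the slack needed to absorb the lower-order $O(1/m^2)$ terms.
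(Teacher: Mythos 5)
Your proof is correct, and it reaches the bound by a somewhat different route than the paper, so a brief comparison is worthwhile. The paper works with the quantity $(m+1)(1-|x(m)|^2)$, observes (via part $(3)$ of Lemma~\ref{L: covering_map_lemma} and $\sin(\pi/m)\sim\pi/m$) that it tends to $2\pi$, notes the elementary inequality $\tfrac{1}{2}(1-|x(m)|^2)\le 1-|x(m)|$, and then uses $4<2\pi$ to conclude that $(m+1)(1-|x(m)|)\ge 2$ for all large $m$ — a soft argument in which $m_1$ is produced non-constructively from the limit. You instead square the target inequality, reduce it by exact algebra to $\sin(\pi/m)\ge 2m/(m^2+1)$, and verify this with the Taylor bound $\sin t\ge t-t^3/6$, which yields an explicit threshold $m^2\ge \pi^3/(6(\pi-2))$ (so one may take $m_1=3$). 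Both arguments pivot on exactly the same slack, namely $\pi>2$ (the paper's $4<2\pi$ is this fact in disguise); yours buys an effective $m_1$ at the cost of slightly more algebra, while the paper's is shorter but non-effective. Your opening observation that $C^*_{\D}(0,w)=|w|$ via the Schwarz lemma is standard and is implicitly assumed in the paper; all your squaring and cross-multiplication steps are legitimate since both sides are positive for $m\ge 2$. No gaps.
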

\begin{proof}
We first observe that
\begin{equation}\label{E: limit_for_ub}
  \lim_{m\to \infty}   (m+1)(1-|x(m)|^2)=2\pi.
\end{equation}
It is elementary to show that 
\begin{equation}\label{E: elementary_inequality}
  \frac{1-|x(m)|^2}{2}\leq (1-|x(m)|) \quad  \forall m\in \N\setminus\{0,1\}.
\end{equation}
As $4<2\pi$, from~\eqref{E: limit_for_ub}, there exists $m_1\in \Z_+$ such that $(m+1)(1-|x(m)|^2) 
\geq 4$ for all $m\geq m_1.$ Now combining this with~\eqref{E: elementary_inequality} we have 
\[
  (m+1)(1-|x(m)|)\geq 2 \quad\forall m\geq m_1,
\]
 which is precisely~\eqref{E: upper_bound}.
\end{proof}

We conclude this section with the following lemma. This lemma provides a lower bound of $C^*_{A(R)}\big(\sqrt R, R^{1-
\frac{1}{m}}\big)$ for sufficiently large $m\in \Z_+$. We will use this in Step~$1$ in Section~\ref{S: example}.

\begin{lemma}\label{L: lowerbound}
 With $A(R)$ as defined in Lemma~\ref{L: covering_map_lemma}, there exists a constant $K(R)>0$ that depends 
 only on $R>1$ such that there exists $m_2\in \Z_+$ such that 
 \[
  C^*_{A(R)}\big(\sqrt R, R^{1-\tfrac{1}{m}}\big)\geq \Big(1-\tfrac{K(R)}{m}\Big) \quad \forall m 
 \geq m_2.
 \]
\end{lemma}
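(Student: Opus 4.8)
The plan is to obtain the lower bound by exhibiting a single explicit competitor in the supremum defining $C^*_{A(R)}\big(\sqrt R,\bcdot\big)$ and then estimating its modulus near the outer boundary circle $\{|w|=R\}$, which is precisely the boundary component approached by the points $R^{1-\frac1m}$ as $m\to\infty$. The competitor I would use is the Ahlfors function $F$ for $A(R)$ and $\sqrt R$, which is well defined because $A(R)$ is a bounded domain neither of whose two boundary components is a point. A routine normalization argument shows $F(\sqrt R)=0$: if $F(\sqrt R)=a\neq0$, then post-composing $F$ with the disc automorphism $z\mapsto(z-a)/(1-\bar a z)$ keeps the image in $\D$, sends $\sqrt R$ to $0$, and multiplies $|F'(\sqrt R)|$ by $(1-|a|^2)^{-1}>1$, contradicting extremality. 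Hence $F$ is admissible in
\[
 C^*_{A(R)}\big(\sqrt R, R^{1-\tfrac1m}\big)=\sup\big\{|g(R^{1-\tfrac1m})|: g\in\mathcal{O}(A(R),\D),\ g(\sqrt R)=0\big\},
\]
so it suffices to bound $\big|F\big(R^{1-\frac1m}\big)\big|$ from below.

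Next I would extract the boundary behaviour of $F$ from Result~\ref{R: Ahlfors}, applied with $m=1$ and the two boundary curves $\{|w|=1\}$ and $\Gamma:=\{|w|=R\}$. It gives that $F$ extends analytically across $\Gamma$ and maps $\Gamma$ homeomorphically onto $\partial\D$, so $|F|\equiv1$ on $\Gamma$ and $F$ is nonvanishing on a full neighbourhood of $\Gamma$; moreover $F$ is a proper two-sheeted map onto $\D$, so both its zeros are interior and bounded away from $\Gamma$. Consequently $u:=\log|F|$ is harmonic and real-analytic on an annular neighbourhood $\{R-\delta<|w|<R+\delta\}$ of $\Gamma$ for some $\delta>0$, with $u\le0$ on $A(R)$ and $u\equiv0$ on $\Gamma$. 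Since $u$ is then $C^1$ up to $\Gamma$, I would bound $\lvert\nabla u\rvert$ by a constant $C(R)$ on the compact annulus $\{R-\tfrac{\delta}{2}\le|w|\le R\}$ and, projecting a point $w$ in this annulus radially to $\tilde w:=Rw/|w|\in\Gamma$, use the mean value inequality along the radial segment $[w,\tilde w]$ to get $|u(w)|=|u(w)-u(\tilde w)|\le C(R)\,(R-|w|)$.

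Finally I would substitute $w_m:=R^{1-\frac1m}$, choosing $m_2$ so that $w_m>R-\tfrac{\delta}{2}$ for all $m\ge m_2$. Then $R-|w_m|=R\big(1-e^{-(\log R)/m}\big)\le R(\log R)/m$ by the inequality $1-e^{-t}\le t$, so $|u(w_m)|\le C(R)R(\log R)/m$ and hence
\[
 |F(w_m)|=e^{u(w_m)}\ge 1-C(R)R(\log R)/m.
\]
Setting $K(R):=C(R)R\log R$ and recalling $C^*_{A(R)}(\sqrt R,w_m)\ge|F(w_m)|$ then gives the assertion for all $m\ge m_2$. The step I expect to be the crux is establishing the first-order estimate $|u(w)|\le C(R)(R-|w|)$: everything hinges on $\log|F|$ being smooth up to $\Gamma$ and vanishing there, which is exactly what the analytic-extension and homeomorphism-onto-$\partial\D$ conclusions of Result~\ref{R: Ahlfors} supply. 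Once that regularity is in hand, the remaining estimates are the elementary ones displayed above.
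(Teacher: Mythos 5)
Your argument is correct, but it takes a genuinely different route from the paper's. The paper gets the lower bound from the most elementary competitor imaginable: the rescaled inclusion $f(w)=w/R$ mapping $A(R)$ into $\D$, whose contractivity gives $C^*_{A(R)}\big(\sqrt R, R^{1-\frac1m}\big)\geq \big|\sqrt R-R^{\frac1m}\big|\big/\big|\sqrt R R^{\frac1m}-1\big|$; the rest of the paper's proof is an explicit calculus estimate (an auxiliary function $\tau(t)=t(\sqrt R+1)(1-R^{1/t})$ and L'Hospital) producing the concrete constant $K(R)=2\tfrac{\sqrt R+1}{\sqrt R-1}\log R$. You instead use the Ahlfors function for $A(R)$ and $\sqrt R$ together with the boundary regularity supplied by Result~\ref{R: Ahlfors} (which the paper does invoke, but only later, in Step~4 of Section~\ref{S: example}, to get finite compactness of $\Omega$): since $\log|F|$ is harmonic and $C^1$ up to the outer circle, vanishes there, and is nonpositive inside, a gradient bound plus $R-R^{1-\frac1m}\leq R(\log R)/m$ yields $|F(R^{1-\frac1m})|\geq 1-K(R)/m$. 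All the individual steps check out: the normalization $F(\sqrt R)=0$, the nonvanishing of $F$ near $\Gamma$, the first-order estimate $|u(w)|\leq C(R)(R-|w|)$, and $e^{u}\geq 1+u$. What your approach buys is conceptual generality\,---\,it would give the same rate for any finitely connected domain with analytic boundary and any sequence approaching the boundary at distance $O(1/m)$\,---\,at the cost of relying on the nontrivial Ahlfors regularity theorem and of producing a non-explicit constant $C(R)$; the paper's approach is self-contained, fully explicit, and uses nothing beyond the Schwarz--Pick contractivity of a linear map. Since the downstream uses of $K(R)$ (in~\eqref{E: final_inequality} and Step~2) only require that it be some finite positive constant depending on $R$ alone, the non-explicitness of your constant is harmless.
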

\begin{proof}
Consider the auxiliary map $f: A(R) \to \D$ defined by $f(w)=w/R$. Then, as $f$ is holomorphic, hence 
contractive for Carath{\'e}odory distances,
\begin{equation}\label{E: cara_lower_bnd}
  C^*_{A(R)}\big(\sqrt R, R^{1-\tfrac{1}{m}}\big) \geq \left|\frac{\sqrt R-R^{\tfrac{1}{m}}}{\sqrt
  RR^{\tfrac{1}{m}}-1}\right| \quad \forall m\in \Z_+ \setminus\{1\}.     
\end{equation}
For $R>1$, let us fix $K(R):=2\tfrac{\sqrt R +1}{\sqrt R -1}\log R$. As $R>1$, whenever $m\geq 3$, the 
expression within $|\bcdot|$ on the right-hand side of~\eqref{E: cara_lower_bnd} is positive. We work with 
this expression (in what follows, it will be assumed that $m\geq 3$) to get 
\begin{equation}\label{E: lower_bnd_cal}
  \frac{\sqrt R-R^{\tfrac{1}{m}}}{\sqrt RR^{\tfrac{1}{m}}-1}-\Big(1-\tfrac{K(R)}{m}\Big) 
  =\frac{m\big(\sqrt R-R^{\tfrac{1}{m}}\big)-m\big(\sqrt RR^{\tfrac{1}{m}}-1\big)+K(R)\big(\sqrt R 
  R^{\tfrac{1}{m}}-1\big)}{m\big(\sqrt R R^{\tfrac{1}{m}}-1\big)}.
\end{equation}
Define the auxiliary function $\tau: [3, \infty) \to \R$ as follows:
\[
  \tau(t):=t(\sqrt R+1)(1-R^{\tfrac{1}{t}}),
  \]
whose connection with~\eqref{E: lower_bnd_cal} will soon be clear. An appeal to L'Hospital's Rule tells us that
\begin{equation}\label{E: L'hospital_limit}
  \lim_{t\to \infty} \tau(t)=-(\sqrt R+1)\log R.
\end{equation}

Since, for $R>1$, we have
\[
  -\frac{3}{2}(\sqrt R+1)\log R< -(\sqrt R+1)\log R,
\]
from~\eqref{E: L'hospital_limit} there exists $m_2\in \Z_+$ (let us take $m_2\geq3$) such that
\begin{equation}\label{E: limit_lower_bound}
  \tau(t)\geq -\frac{3}{2}(\sqrt R+1)\log R \quad \forall t\geq m_2.
\end{equation}
Observe that the numerator of the right-hand side of~\eqref{E: lower_bnd_cal} can be written as 
\begin{equation}\label{E: numertaor_limit}
  \tau(m)+K(R)(\sqrt R R^{\tfrac{1}{m}}-1) \quad \forall m\geq 3.
\end{equation}
Now substitute the value of $K(R)$ in~\eqref{E: numertaor_limit} and note that $(\sqrt RR^{\tfrac{1}{m}}-1)/(\sqrt 
R -1)\geq 1$
for each $m\geq 3$. Combining this with~\eqref{E: limit_lower_bound} we have 
\begin{equation*}
  \tau(m)+K(R)(\sqrt R R^{\tfrac{1}{m}}-1) 
  \geq -\frac{3}{2}(\sqrt R+1)\log R +2(\sqrt R+1)\log R >0 \quad \forall m \geq m_2.
\end{equation*}
This shows that the numerator of the right-hand side of~\eqref{E: lower_bnd_cal} is positive for each $m \geq
m_2$. As $m(\sqrt RR^{\tfrac{1}{m}}-1)$ is positive for each $m\in \Z_+$, we have 
\begin{equation}\label{E: lower_bound}
  \frac{\sqrt R-R^{\tfrac{1}{m}}}{\sqrt RR^{\tfrac{1}{m}}-1}\geq\Big(1-\tfrac{K(R)}{m}\Big) \quad 
  \forall m\geq m_2. 
\end{equation}
Combining~\eqref{E: lower_bound}~and~\eqref{E: cara_lower_bnd}, we have the desired inequality. 
\end{proof}
\smallskip

\section{An example}\label{S: example}
This section is dedicated to constructing an example of an analytic space that is not biholomorphically
equivalent to the example constructed by Jarnicki--Pflug--Vigu{\'e} that was alluded to in
Section~\ref{S: intro}. This example will be central to the proof of Theorem~\ref{T: uncountable_example}.

\begin{example}\label{Ex: annulus}
An example of an analytic space that is Carath{\'e}odory hyperbolic, $C_X$-complete but not
$C_X$-finitely compact, and which is not biholomorphically equivalent to the example in
\cite{jarnickipflugvigue:aeccbnfca93}.
\end{example}
The construction of the above example and the proof of the assertions made in the statement of 
Example~\ref{Ex: annulus} will require several steps. Steps 1 and 4 are inspired in a significant way by the construction in \cite{jarnickipflugvigue:aeccbnfca93}.

\medskip
\noindent{\textbf{Step 1.}} \emph{Construction of the analytic space $X(R)$}
\smallskip

\noindent{We will begin by describing the topology of $X(R)$. To this end, let us abbreviate $A(R)$\,---\,the annulus 
discussed in Section~\ref{S: Tech_Lemma}\,---\,as $\Omega$. Let us consider the following sets in $\Omega$:
\begin{equation}\label{E: attaching_points}
  \mathscr{S}_{n}:=\{R^{1-\tfrac{1}{m}}: m=2^n, 2^n+1,\dots,2^{n+1}-1\}
\end{equation}
for $n\in \Z_+$. For each $n\in \Z_+$, let us write $x^{(n)}_m=R^{1-1/(2^{n}+m-1)}$, $m=1,\dots,2^n$. As a 
topological space,
\[
  X(R):=\bigcup_{n=0}^\infty \Omega \times \{n\}\bigg/\!\!\thicksim
\]
where we define
\[
  (z,j)\thicksim (w,k) \iff
  \begin{cases} z=w \text{ and } j=k, \text{ or }\\
    \text{$j=0$, $k$}\in \Z_+ \text{ and } z=w=x^{(k)}_m \text{ for some } m=1 ,\dots, 2^k,
  \end{cases}
\]
and endow $X(R)$ with the quotient topology induced by the identification described above. More descriptively, the
set $X(R)$ and the topology on it are obtained by considering a disjoint union of countable copies of 
$\Omega$, labelling each copy as $\Omega_n, n\in\N$, viewing each set $\mathscr{S}_n$ as included in $\Omega_n, n\in \Z_+$, 
and identifying each $x^{(n)}_m$ with $y^{(n)}_m$, $m=1,\dots,2^n$, where
\[
  y^{(n)}_m:= \text{the point $R^{1-1/(2^{n}+m-1)}$ viewed as belonging to }\Omega_0.
\]
Note that $X(R)$ is Hausdorff, which is a fact that we will need in the argument that follows.
\smallskip

To specify the analytic structure on $X(R)$, we shall appeal to \cite[Proposition~7, page~150]{gunningRossi:afscv65}.
Let us first denote the above-mentioned quotient map by
$Q: \bigcup_{n=0}^\infty \Omega \times \{n\}\to X(R)$ and $Q((x,n))$ as $[x,n]$. Let $\alpha\in X(R)$.
Our analysis consists of two cases.

\medskip
\noindent{\textbf{Case~1.} \emph{$\alpha$ is such that $Q^{-1}\{\alpha\}$ is a singleton.}}
\smallskip

\noindent{Write $Q^{-1}\{\alpha\} = \{(x_0, n)\}$. Let $D_{\alpha}$ denote a disc with centre $x_0$ such that
$D_{\alpha}\varsubsetneq \Omega$ and
\begin{itemize}
  \item $D_{\alpha}\cap \mathscr{S}_n = \emptyset$ if $n\in \Z_+$,
  \item $D_{\alpha}\cap (\bigcup_{n\in \Z_+}\mathscr{S}_n) = \emptyset$ if $n=0$.
\end{itemize}
Let us define $X_{\alpha} := Q(D_{\alpha}\times \{n\})$ and
$\psi_{\alpha}: X_{\alpha}\ni [z,n]\longmapsto (z-x_0)$.}
  
\medskip
\noindent{\textbf{Case~2.} \emph{$\alpha$ is such that $Q^{-1}\{\alpha\}$ has two elements.}}
\smallskip

\noindent{By definition, there exist $n\in \Z_+$ and $m=1,\dots, 2^n$,
such that $Q^{-1}\{\alpha\} = \{(x^{(n)}_m, n), (x^{(n)}_m, 0)\}$. Let $D_{\alpha}$ denote a disc with
centre $x^{(n)}_m$ such that $D_{\alpha}\varsubsetneq \Omega$ and
$(D_{\alpha}\setminus \{x^{(n)}_m\})\cap (\bigcup_{n\in \Z_+}\mathscr{S}_n) = \emptyset$.
Let $X_{\alpha} = Q\big((D_{\alpha}\times \{n\})\cup (D_{\alpha}\times \{0\})\big)$. Now,
define $\psi_{\alpha}: X_{\alpha}\to \{(z_1, z_2)\in \C^2: z_1z_2=0\}$ as follows:
\[
  \psi_{\alpha}([z,j]) := \begin{cases}
                            (z-x^{(n)}_m, 0), &\text{if $j=0$}, \\
                            (0, z-x^{(n)}_m), &\text{if $j=n$}. 
                          \end{cases}
\]}

Note that, owing to the quotient topology, each $\psi_{\alpha}$ is a homeomorphism onto its image.
Clearly $X(R) = \bigcup_{\alpha\in X(R)}X_{\alpha}$. It is elementary to check that the compatibility
conditions stated in \cite[Proposition~7, page~150]{gunningRossi:afscv65} hold true. Then, this
proposition endows $X(R)$ with the structure of a connected one-dimensional analytic space.}
\smallskip

Before proceeding to the next step, we need to make some preparations. First: a holomorphic function $\phi: 
X(R)\to \C$ can be identified with the sequence of holomorphic functions $\phi_n: 
\Omega\times \{n\}\to \C$, $n\in \N$, such that $\phi_n\big(x^{(n)}_m\big)=\phi_0\big(y^{(n)}_m\big)$ for 
each $m=1,\dots,2^n$, where we commit a mild abuse of notation\,---\,repeated below\,---\,of denoting $\phi_n(z,n)$ simply as $\phi_n(z)$, $z\in \Omega$.
\smallskip

Secondly, we present a pair of inequalities that will be needed in the subsequent steps. Combining 
Lemmas~\ref{L: upperbound} and~\ref{L: lowerbound} with the contractivity property of 
the function $p$ as defined in Lemma~\ref{L: covering_map_lemma}, there exists $N\in \Z_+$ such 
that for each $m\geq N$, we have
\[
  1-\frac{K(R)}{m} \leq C_\Omega^*(\sqrt R, R^{1-\tfrac{1}{m}})\leq |x(m)|\leq 1-\frac{2}{m+1}.
\]
 It follows that for $m=2^n, \dots, 2^{n+1}-1$, $n$ sufficiently large,
\[
  1-\frac{K(R)}{2^n} \leq C_\Omega^*\big(\sqrt R,R^{1-\frac{1}{m}}\big)\leq
|x(m)|\leq 1-\frac{2}{2^{n+1}}.
\]
Hence, for each $n$ sufficiently large, we have the following inequality
\begin{equation}\label{E: final_inequality}  
  \bigg(1-\frac{K(R)}{2^n}\bigg)^{2^n} \leq \prod_{m=2^n}^{2^{n+1}-1}C_\Omega^*\big(\sqrt 
  R,R^{1-\frac{1}{m}}\big) \leq \prod_{m=2^n}^{2^{n+1}-1}|x(m)| \leq \bigg( 1
  -\frac{2}{2^{n+1}}\bigg)^{2^n}.
\end{equation}

As $\Omega$ is $C_\Omega$-hyperbolic, there exists a Carath{\'e}odory extremal for the points $x^{(n)}_m$, 
$\sqrt R\in \Omega$.
Fix such an extremal and call it $f_{m,n}$; then $f_{m,n}\big(x^{(n)}_m\big)=0$ and 
$|f_{m,n}(\sqrt R)|=C_\Omega^*\big(\sqrt R, 
x^{(n)}_m \big)$ for each $n\geq 1$, $m=1, \dots,2^n$. Define $f_n:\Omega \to \D$ by 
\begin{equation}\label{E: prod_cara_extrem}
  f_n(z):=\prod_{m=1}^{2^n}f_{m,n}(z).
\end{equation}

\pagebreak
\noindent{\textbf{Step 2.}\emph{ Let $\big(f_{n_j}\big)_{j\geq 1}\subset \big(f_n\big)_{n\geq 1} $ be a 
locally uniformly convergent subsequence of $\big(f_n\big)_{n\geq 1}$ and assume that the limit is $f$. Then 
$f$ has no zeros.}}
\smallskip

\noindent{As $\big(f_{n_j}\big)_{j\geq 1}$ converges locally uniformly, $\big(f_{n_j}
(\sqrt R)\big)_{j\geq 1}$ is also a convergent sequence in $\C$ and its limit is greater than or equal to 
$1/e^{K(R)}$ by~\eqref{E: final_inequality}. So $f$ is not identically zero on $\Omega$. Let $z_0 \in
\Omega$ be such that $f(z_0)=0$. Let $r>0$ such that $f(z)\neq 0$ on $|z-z_0|=r$ and 
$\overline{B(z_0,r)}\subset \Omega$.
\smallskip

If $y\in f_n^{-1}\{0\}\setminus\mathscr{S}_{n}$ for some $n\in \Z_+$, then there exists $m$ with
$m=1,\dots, 2^n$ such that $y \in f^{-1}_{m,n}\{0\}\setminus \{x^{(n)}_m\}$. By the definition of the 
M{\"o}bius pseudodistance and Lemma~\ref{L: lowerbound}, there exists $N$ such that for each $n \geq N$, we have
\begin{equation}\label{E: eqn_of_0}
  C^*_\Omega(\sqrt R, y) \geq C^*_\Omega\big(\sqrt R, x^{(n)}_m\big) \geq 1- \frac{K(R)}
  {2^n}.
\end{equation}
So all zeros of $f_n$ lie outside the ball $B_{C^*_\Omega}\big(\sqrt R, 1-\frac{K(R)}{2^n}\big)$ for 
each $n\geq N$. As the right hand side of~\eqref{E: eqn_of_0} converges to $1$, there exists 
$M\in \Z_+$ such that 
\[
  B(z_0, r) \cap \bigg(\Omega\setminus B_{C^*_\Omega}\bigg(\sqrt R, 1-\frac{K(R)}
  {2^n}\bigg)\bigg) = \emptyset
\]
for each $n\geq M$. So $B(z_0, r)$ does not contain any zero of 
the function $f_n$ for all $n\geq\max\{M,N\}$. This contradicts Hurwitz's Theorem because $f(z_0)=0$, which establishes Step~2.
}

\medskip
\noindent{\textbf{Step 3.}\emph{ $X(R)$ is Carath{\'e}odory hyperbolic.}}
\smallskip

\noindent{Let $\pi: X(R)\to \Omega$ be the holomorphic function given by $X(R)\ni [z, n] \longmapsto z$ (where $[z, n]$ is as introduced in Step~$1$).
\smallskip

For the points $x\neq y \in X(R)$ with $\pi(x)\neq\pi(y)$, $C_{X(R)}(x,y)$ is positive due to the contractivity property of $\pi$ and Carath{\'e}odory hyperbolicity of $\Omega$.
\smallskip

For the points $x\neq y$ such that $\pi(x)=\pi(y)$, our analysis consists of two cases.

\medskip
\noindent{\textbf{Case 1.}\emph{ $x$ and $y$ such that $Q^{-1}\{x\}$ and $Q^{-1}\{y\}$ are both singleton sets.}}
\smallskip

\noindent{There exist $m_1\neq m_2\in \N$ with $m_1>0$ such that $Q^{-1}\{x\}\in \Omega \times \{m_1\}$ and $Q^{-1}\{y\}\in
\Omega \times \{m_2\}$. Fix a holomorphic function $h:\Omega \times \{m_1\} \to \D $ such that $h\big((x^{(m_1)}_k,m_1)\big)=0$ for each 
$k=1,\dots, 2^{m_1}$ and $h\big(Q^{-1}\{x\}\big)\neq 0$. Now, define a sequence of functions $\phi_n: \Omega 
\times \{n\} \to \D$ for each $n\in \N$ by
\[
  \phi_n(z):= \begin{cases}
              h(z), &\text{if $z\in \Omega \times \{m_1\},$}\\
              0, &\text{otherwise}.
\end{cases}
\]
As discussed above, this sequence induces a holomorphic function $F:X(R) \to \D$ with $F(x)\neq0$ 
and $F(y)=0$.
}

\medskip
\noindent{\textbf{Case 2.}\emph{ $x$ and $y$ such that either $Q^{-1}\{x\}$ or $Q^{-1}\{y\}$ consists of two elements.}}
\smallskip

\noindent{Since $\pi(x)=\pi(y)$, we may assume that $Q^{-1}\{x\}$ has two elements and $Q^{-1}\{y\}$ is singleton. So 
\[
  \pi(x)=\pi(y)=x^{(n)}_m  \text{ and } Q^{-1}\{x\}=\{\big(x^{(n)}_m, n\big),\big(x^{(n)}_m, 0\big)\}
\]
for some $n\in \Z_+$ and $m=1,\dots,2^n$. Also, there exists $m_1\in\Z_+ $ with $n\neq m_1$ such that
\[
  Q^{-1}\{y\}=\big(x^{(n)}_m, m_1\big)\neq \big(x^{(m_1)}_k, m_1\big) \text{ for each 
  $k=1,\dots,2^{m_1}$}.
\]
Now, by an argument similar to that in Case~$1$, we obtain a $\D$-valued holomorphic function
on $X(R)$ that separates $x$ and $y$.
\smallskip

For $x\neq y$ as above, in either case, clearly $C_{X(R)}(x,y)>0$.
}
}

\medskip
\noindent{\textbf{Step 4.}\emph{ $X(R)$ is $C_{X(R)}$-complete but not $C_{X(R)}$-finitely compact}}
\smallskip

\noindent{To prove that $X(R)$ is $C_{X(R)}$-complete, it is useful to know that $\Omega$ is $C
_\Omega$-finitely compact. This fact follows from Result~\ref{R: Ahlfors} together with \cite[Theorem~7.4.1]{jarnickipflug:idca93}.
\smallskip

Let $(x_n)_{n\geq 1}$ be a $C_{X(R)}$-Cauchy sequence and define $A:=\bigcup_{n\geq 1} Q^{-1}\{x_n\}$. Owing to the
contractivity property of $\pi$ defined in Step~$3$, we have for any $\varepsilon >0$, $N\in \Z_+$
such that
\[
  C_\Omega (\pi(x_n), \pi(x_m))\leq C_{X(R)}(x_n, x_m)\leq \varepsilon \quad  \forall m,n \geq N.
\]
As $\Omega$ is $C_{\Omega}$-finitely compact, there exists a compact set $K$ such that $\pi(x_n) \in K$ for each $n\in \Z_+$.

\medskip
\noindent{\textbf{Claim.}}\emph{ The set $\{j\geq 1: A\cap \big(\Omega\times \{j\}\big) \neq \emptyset\}$ is finite.} 
\smallskip

\noindent{If the claim is not true, we can assume that for each 
$n\ge1 $ there exists $m_n\in \Z_+$ with $m_n<m_{n+1}$, and $Q^{-1}\{x_n\}\in \Omega\times\{m_n\}$. Now, if we define 
$a_n:=x_{2n}$ and $b_n:=x_{2n+1}$}, then $C_{X(R)}(a_n, b_n)\to 0$. Define a sequence of 
functions $F_n : X(R) \to \D$ for each $n\in \N$ induced by $\big(F_{n, k}\big)_{k\geq 1}$, where $F_{n, k} : \Omega\times\{k\}\to 
\D$ for each $k\in \Z_+$ is defined by
\begin{equation*}
  F_{n, k}(z, k):=\begin{cases}
                   f_n(z), & \text{if } k=n,\\
                   0, & \text{otherwise}
\end{cases}
\end{equation*}
where $f_n$ is defined in~\eqref{E: prod_cara_extrem}. 
As $C_{X(R)}(a_n, b_n)\to 0$, we have $|F_{m_{2n}}(a_n)|=|f_{m_{2n}}\circ\pi(x_{2n})|\to 0.$ As $\big(\pi(x_n)\big)_{n\geq 1}\subset 
K$, we can assume that $\pi(x_{2n}) \to a_0\in \Omega$ (passing to a subsequence and relabelling, if necessary) which contradicts the assertion in Step~2. Hence the claim.
\hfill$\blacktriangleleft$
\smallskip

So there exist $j_1,\dots,j_k\in\N$ such that $A \subset (\pi\circ Q)^{-1}(K)\cap\big(\bigcup_{i=1}^k \Omega\times
\{j_i\}\big)$. Hence there exists $i_0$ with $1\leq i_0 \leq k$ and $\big(x_{n_s}\big)_{s\geq 
1}\subset(x_n)_{n\geq 1}$ such that $ Q^{-1}\{x_{n_s}\}\cap\big(\Omega\times\{j_{i_0}\}\big) \neq \emptyset$ 
for each $s\in \Z_+$. By the definitions of $Q$ and $\pi$, $(\pi\circ Q)^{-1}(K)\cap \Omega \times \{j_i\}$ 
is compact for all $i=1,\dots, k$. Thus, we may assume that (passing to a subsequence and relabelling, if 
necessary) $x_{n_s} \overset{X(R)}{\longrightarrow} x_0$. As $C_{X(R)}(x_0, \bcdot)$ is continuous, 
$x_{n_s}\overset{C_{X(R)}}{\longrightarrow}x_0$. As $(x_n)_{n\geq 1}$ is $C_{X(R)}$-Cauchy, we have 
$x_n\overset{C_{X(R)}}{\longrightarrow} x_0$. 
\smallskip

Now, if we take any other subsequence $\big(x_{n_l}\big)_{l\geq 1} \subset (x_n)_{n\geq 1}$, then by an 
argument similar to the one in the previous paragraph, we get $\big(x_{n_{l_t}}\big)_{t\geq 1} 
\subset\big(x_{n_l}\big)_{l\geq 1}$ and $y_0\in X(R)$ such that
\[
  Q^{-1}\{x_{n_{l_t}}\}\cap \big(\Omega \times \{j_{i_{1}}\}\big)\neq \emptyset \text{ for some $1\leq i_1 
  \leq k$, for each $t\in \Z_+$,} \quad\text{and}\quad  x_{n_{l_t}}\overset{X(R)}{\longrightarrow} y_0.
\]

By the continuity of $C_{X(R)}(y_0, \bcdot)$ and 
Carath{\'e}odory hyperbolicity of $X(R)$, we have $x_0=y_0$. So we have shown that any subsequence of 
$(x_n)_{n\geq 1}$ has a further subsequence which is convergent and converges to the same limit. Hence 
$x_n\overset{X(R)}{\longrightarrow}x_0$. Thus $X(R)$ is $C_{X(R)}$-complete.
\smallskip

Finally, we will show that $X(R)$ is not $C_{X(R)}$-finitely compact. Here, we adapt an argument 
in \cite{jarnickipflugvigue:aeccbnfca93}, but as $\Omega \not\cong \D$, we need the inequality~\eqref{E: 
final_inequality} in place of the classical Schwarz Lemma in \cite{jarnickipflugvigue:aeccbnfca93}. To this 
end, let $g: X(R) \to \D$ be a 
holomorphic function such that $g\big(Q(\sqrt R,0)\big)=0$. Define $\Phi_n: \Omega \to \D$ by 
\begin{equation}\label{E: auxilary_holo_fns}
  \Phi_n(z):= \frac{g\big(Q(z,n)\big)-g\big(Q( z, 0)\big)}{2} \quad \forall n\in\N.
\end{equation}
We have $\Phi_n: \Omega \to \D$ holomorphic function and $\Phi_n$ vanish on 
$\mathscr{S}_{n}$ for all $ n\geq 1$, as defined in~\eqref{E: attaching_points}. Borrowing notation from 
Part~$2$ of Lemma~\ref{L: covering_map_lemma}, denote
$x(2^n+m-1)$ by $\widetilde{x}^{(n)}_m$ for each $n\in\Z_+$ and $m=1,\dots,2^n$. So we have
$\Phi_n\circ p\big(\widetilde{x}^{(n)}_m\big)=0$ for each $n\in\Z+$ and $m=1,\dots,2^n$. Now define $g_n:
\D \to\D$ by
\begin{equation*}
  g_n(z):= \prod_{m=1}^{2^n} \frac{z-\widetilde{x}^{(n)}_m}{1- \overline{\widetilde{x}^{(n)}_m}z} \quad \forall n\in 
  \Z_+.
\end{equation*}
 We have $(\Phi_n \circ p)/g_n: \D \to \D$ is a holomorphic function for each $n\in\Z_+$. From the last 
 inequality in~\eqref{E: final_inequality} and from~\eqref{E: auxilary_holo_fns}, for each  sufficiently 
 large $n\in \Z_+$, since $p(0) = \sqrt{R}$, we have:
\begin{equation*}
  \big|\Phi_n (\sqrt R) \big| \leq \prod_{m=1}^{2^n}\big|\widetilde{x}^{(n)}_m\big| \leq \frac{1}{e},
  \text{ which implies } |g\big(Q(\sqrt R,n)\big)|= 2|\Phi_n(\sqrt R)|\leq\frac{2}{e}.
\end{equation*}
As $g$ is an arbitrary holomorphic function from $\Omega$ to $\D$ with $g\big(Q(\sqrt R, 0)\big)=0$, we have
\[
  C^*_{X(R)} \big(Q(\sqrt R,0), Q(\sqrt R,n)\big) \leq \frac{2}{e} \quad \text{for all sufficiently large } 
  n\in \Z_+.
\]
Thus $\dball{C}{*}{X(R)}(Q(\sqrt R, 0), 2/e)$ is not relatively compact inside $X(R)$.
\smallskip

Finally, we note that the example in \cite{jarnickipflugvigue:aeccbnfca93} is contractable whereas,
clearly, $X(R)$ is not. Thus, the two examples are not even homeomorphic, hence they are not
biholomorphically equivalent. This completes the proofs of the properties claimed for $X(R)$.
}
\medskip

\section{Proofs of our theorems}\label{S: pf_thm}
Now we are in a position to present the proofs of our main results. We begin with the proof of Theorem~\ref{T: strngly_complte}. We will use the notation and conventions introduced in Section~\ref{SS: nota_conv}. 
\emph{Importantly,
in what follows, the conventions stated in $(5)$ of Section~\ref{SS: nota_conv} will be in effect.}

\begin{proof}[The proof of Theorem~\ref{T: strngly_complte}]
We will show that if $X$ is $C_X$-complete, then $X$ satisfies condition~$(1)$ in 
Lemma~\ref{L:equvlnce_condn}. To this end, let $K \varsubsetneq X$ be a compact set such that
$K^\mathrm{o} \neq \emptyset$ and fix $z \in K^{\mathrm{o}}$. Assume there does not exist any
$r > 0$ such that $\dball{C}{}{X}(z, r_z) \subseteq K$. So there exists $z_n \in \dball{C}{}{X}
(z, 1/n) \cap (X \setminus K)$ for each $n\in \Z_+$; this implies that, as $z_n 
\overset{C_X}{\longrightarrow} z$, $(z_n)_{n \geq 1}$ is a $C_X$-Cauchy sequence. As $X$
is $C_X$-complete, there exists a point $z_0 \in X$ such that $z_n \overset{X}
{\longrightarrow} z_0$. As $z_n \in X \setminus K$ for all $n\in \Z_+$, we have $z_0 \in X\setminus 
K^{\mathrm{o}}$. By the continuity of $C_X(z_0, \bcdot)$, $z_n \overset{C_X}{\longrightarrow} 
z_0$. As $X$ is $C_X$-hyperbolic, we have $z=z_0$. This contradicts the fact that $z\in 
K^{\mathrm{o}}$. Condition~$(1)$ in Lemma~\ref{L:equvlnce_condn}, and thus the result, follows.
\end{proof}

Next, we prove Theorem~\ref{T: uncountable_example}. We use the fact that two annuli with inner radius $1$ and outer radii $R_1$, $R_2$ are biholomorphic if and only if $R_1 = R_2$.

\begin{proof}[The proof of Theorem~\ref{T: uncountable_example}]
Let $X(R_1)$ and $X(R_2)$, $R_1 \neq R_2$, be two analytic spaces whose construction is given in Section~\ref{S:
example}, with $X(R_i)$ constructed out of copies of the annulus $A(R_i)$, $i=1,2$. Assume there exists a 
biholomorphism $F: X(R_1) \to X(R_2)$. We will use the notation introduced in Section~\ref{S: example}, with
$[z, k]_i$ denoting the equivalance class of $(z, k)\in A(R_i)\times \{k\}$ in $X(R_i)$, $i=1,2$. We have 
natural injective holomorphic maps $i_k : A(R_1)\times \{k\}\to X(R_1)$ and $j_k : A(R_2)\times \{k\}\to X(R_2)$,
\[
  i_k(z, k):=[z, k]_1 \text{ and } j_k(z, k):=[z,k]_2,
\]
for each $k\in \N$. As $F$ is an open map and a local homeomorphism, we have the following:
\begin{itemize}
  \item For $\alpha \in X(R_1)$ with $Q_1^{-1}\{\alpha\}$ a singleton, $Q_2^{-1}\{F(\alpha)\}$ is also a singleton,
  \item For $\alpha \in X(R_1)$ with $Q_1^{-1}\{\alpha\}$ having two elements, $Q_2^{-1}\{F(\alpha)\}$ also has two elements,
  \end{itemize}
where $Q_1$ and $Q_2$ are the relevant quotient maps.  
\smallskip

Let us define for $n\in \Z_+$, $B_{i,n}:=\{[x^{(n)}_{i,m}, n]_i:m=1,\dots,2^n\}\subset X(R_i)$, where 
$x^{(n)}_{i,m}:=R_i^{1-1/(2^n+m-1)}$, $i=1,2$. Let us denote $C_{1,k}:=\big(i_k\big(A(R_1)\times
\{k\}\big)\setminus B_{1,k}\big) \subset X(R_1)$, $k\in \Z_+$, and $C_{1,0}:=\left(i_0\big(A(R_1)\times
\{0\}\big)\setminus \bigcup_{k=1}^\infty B_{1,k}\right) \subset X(R_1)$. In a similar way we can define 
$C_{2,k}\subset X(R_2)$ for each $k\in \N$. It follows from the bullet-points above that $F(C_{1,k})\subset 
X(R_2)\setminus \bigcup_{n=1}^\infty B_{2,n}$ for each $k\in \N$. As $F(C_{1,k})$ is connected for each $k\in 
\N$, there exists $l(k)\in\N$ such that
\begin{equation}\label{E: step_before_biholo}
  F(C_{1, k}) \subseteq C_{2, l(k)}.  
\end{equation}
Also $F(C_{1,k})$ is open for each $k\in\N$. If $k_1 \neq k_2$, then $C_{1, k_1}\cap C_{1, k_2}=\emptyset$, by construction of $X(R_1)$; thus, by injectivity 
of $F$, $F(C_{1,k_1})\cap F(C_{1, k_2}) = \emptyset$. As $F$ is a surjective and open mapping, and $C_{2,n}$ is connected for each 
$n\in \N$, we have equality in~\eqref{E: step_before_biholo}. Hence $C_{1, k}$ and $C_{2,l(k)}$ are homemorphic 
via $F$; this implies $k=l(k)$.
\smallskip

Let $\pi_i: X(R_i)\to A(R_i)$ for $i=1,2$ be as introduced at the begining of Step~$3$ of the construction in 
Section~\ref{S: example}. As $\pi_2 \circ F \circ \left(\left. i_1\right|_{\,A(R_1)\times \{1\}\setminus
\{(x^{(1)}_{1,1},1), (x^{(1)}_{1,2},1)\}}\right)$ is a holomorphic function into $A(R_2)$, 
by Riemann's Removable Singularity Theorem, it extends to a holomorphic map on $A(R_1)\times \{1\}$. But since, by 
the conclusion of the previous paragraph, $\pi_2 \circ F \circ i_1: A(R_1)\times \{1\} \to A(R_2)$ is a 
continuous bijection, $\pi_2 \circ F \circ i_1$ induces a biholomorphism between $A(R_1)$ and $A(R_2)$, which 
contradicts the fact that $R_1 \neq R_2$.
\smallskip

So, there are uncountably many biholomorphically inequivalent analytic spaces. Step~$4$ in Section~\ref{S: 
example} shows that they are not $C_X$-finitely compact but are $C_X$-complete. Finally, from Theorem~\ref{T: strngly_complte} the conclusion of this theorem follows. 
\end{proof}

\section*{Acknowledgements}
I would like to thank my thesis advisor, Prof. Gautam Bharali, 
for many invaluable discussions during the course of this work. I am also grateful to him for his advice on
the writing of this paper. This work is supported in part by a scholarship from the Prime Minister's Research 
Fellowship (PMRF) programme (fellowship no.~0202593) and by a DST-FIST grant (grant no.~DST FIST-2021 [TPN-700661]).
\smallskip

\end{document}